%*****************************************************************************
%
%   Title: Integrality of Seshadri constants on self-products of elliptic curves
%   Authors: Thomas Bauer, Felix Fritz Grimm, Maximilian Schmidt
%
%*****************************************************************************

\documentclass[11pt]{article}

\usepackage[a4paper]{anysize}\marginsize{3.5cm}{3.5cm}{1.3cm}{2.5cm}
\pdfpagewidth=\paperwidth \pdfpageheight=\paperheight

\usepackage[latin1]{inputenc}
\usepackage{amssymb}

\pagestyle{myheadings}
\thispagestyle{empty}
\emergencystretch=3em

%*****************************************************************************
% Style

\topsep=1\baselineskip plus 6pt minus 1pt \partopsep=0cm

\makeatletter\renewcommand\@seccntformat[1]{\csname the#1\endcsname.\enspace}\makeatother
\renewenvironment{abstract}{\begin{quote}\hrulefill\par\footnotesize\textbf{\abstractname.}}{\par\vskip-0.5\baselineskip\hrulefill\end{quote}}

%*****************************************************************************
% Theorems

\newtheorem{introtheorem}{Theorem}

\newtheorem{thm}{Theorem}[section]

\newtheorem{lemma}[thm]{Lemma}
\newtheorem{proposition}[thm]{Proposition}

\newcommand\mkthm[2]{\newenvironment{#1}{\begin{#2}\rm}{\end{#2}}}
 \mkthm{definition}{tdefinition}
         \mkthm{remark}{tremark}
       \mkthm{example}{texample}
     \mkthm{question}{tquestion}

\newtheorem{thevarthm}[thm]{\varthmname}

\newenvironment{varthm*}[1]{\trivlist\item[]{\bf #1.}\it}{\endtrivlist}

\newenvironment{proof}[1][Proof]{\trivlist\item[\hskip\labelsep{\textit{#1.}}]}{\hspace*{\fill}$\Box$\endtrivlist}

%*****************************************************************************
% Macros (from mathdef.tex)

\renewcommand\O{\mathcal O}
\renewcommand\bar{\overline}

\renewcommand\ge{\geqslant}
\renewcommand\le{\leqslant}

\newcommand\keywords[1]{{\renewcommand\thefootnote{}\footnotetext{\textit{Keywords:} #1.}}}
\newcommand\subclass[1]{{\renewcommand\thefootnote{}\footnotetext{\textit{Mathematics Subject Classification (2010):} #1.}}}

\newcommand\Q{\mathbb Q}
\newcommand\Z{\mathbb Z}

\newcommand\be[1][@{\;}r@{\;}c@{\;}l@{\;}l@{\;}]{$$\everymath{\displaystyle}\renewcommand\arraystretch{1.2}\begin{array}{#1}}
\newcommand\ee{\end{array}$$}

\newcommand\tfrac[2]{{\textstyle\frac{#1}{#2}}}
\newcommand\compact{\itemsep=0cm \parskip=0cm}
\newcommand\set[1]{\left\{#1\right\}}
\newcommand\with{\,\,\vrule\,\,}

\newcommand\tmod[1]{\;({\rm mod}~#1)}
\newenvironment{bycases}{\left\{\begin{array}{@{}l@{\quad}l}}{\end{array}\right.}
\newcommand\isom{\simeq}  % isomorph (mit nur einem Strich)
\newcommand\tensor{\otimes}
\newcommand\eqnref[1]{(\ref{#1})}
\newcommand\newop[2]{\newcommand#1{\mathop{\rm #2}\nolimits}}
\newcommand\renewop[2]{\renewcommand#1{\mathop{\rm #2}\nolimits}}

\newop\Diag{Diag}
\newop\Hom{Hom}
\newop\GL{GL}
\newop\LCM{LCM}
\newop\Vol{Vol}
\newop\Pos{Pos}
\newop\Amp{Amp}
\newop\Bigcone{Big}
\newop\Nef{Nef}
\newop\NS{NS}
\newop\Fix{Fix}
\renewop\Re{Re}
\newop\mult{mult}
\newop\Pic{Pic}

\newop\End{End}

%*****************************************************************************
% Macros (dokumentenspezifisch)

\newcommand\eps{\varepsilon}
\newcommand\Pell[1]{\mathop{\rm Pell}(#1)}

%*****************************************************************************
%*****************************************************************************

\begin{document}

   \title{On the integrality of Seshadri constants \\ of abelian surfaces}
   \author{\normalsize Thomas Bauer, Felix Fritz Grimm, Maximilian Schmidt}
   \date{\normalsize August 26, 2019}
   \maketitle
   \thispagestyle{empty}
   \keywords{abelian surface, elliptic curve, complex multiplication, Seshadri constant, integral}
   \subclass{%
      14C20, % Divisors, linear systems, invertible sheaves
      14H52, % Elliptic curves
      14Jxx, % Surfaces and higher-dimensional varieties
      14Kxx  % Abelian varieties and schemes
   }

\begin{abstract}
   In this paper we consider the question of when Seshadri
   constants on abelian surfaces are integers.
   Our first result concerns
   self-products
   $E\times E$ of elliptic curves:
   If $E$ has complex multiplication in
   $\Z[i]$ or in $\Z[\frac12(1+i\sqrt3)]$
   or if $E$ has no complex multiplication at all, then
   it is known that for every ample line bundle
   $L$ on $E\times E$,
   the Seshadri constant
   $\eps(L)$ is an integer.
   We show that, contrary to what one might expect,
   these are in fact the only elliptic curves
   for which this integrality statement holds.
   Our second result answers the question how
   -- on any abelian surface~--
   integrality of Seshadri constants is related to elliptic curves.
\end{abstract}

%*****************************************************************************

\section{Introduction}

   For an ample line bundle $L$
   on a smooth projective variety $X$, the \emph{Seshadri constant}
   of $L$ at a point $x\in X$ is by definition
   the real number
   \be
      \eps(L,x)=\inf\set{\frac{L\cdot C}{\mult_x(C)}\with C\mbox{ irreducible curve through } x}
      \,.
   \ee
   On abelian varieties, where
   they are independent of
   the chosen point $x$,
   these invariants
   have been the focus of a great deal of attention
   \cite{Lazarsfeld:periods,Nakamaye:Seshadri-abelian-var,Bauer:periods}.
   In the two-dimensional case, they
   are completely understood in the case when the Picard number
   of the abelian surface
   is one \cite{Bauer:sesh-alg-sf}. At the other extreme,
   self-products $E\times E$ of elliptic curves were studied in
   \cite{Bauer-Schulz}, where $E$ is either an elliptic curve
   without complex multiplication or
   with $\End(E)=\Z[i]$ or $\End(E)=\Z[\frac12(1+i\sqrt3)]$.
   In those cases, the Seshadri constants $\eps(L)$ of all ample
   line bundles $L$ on $E\times E$ were found to be integers~--
   they are
   in fact computed by elliptic curves.
   It is natural to expect that this should in effect
   hold on all surfaces $E\times E$, where $E$ is an elliptic
   curve. Surprisingly, however,
   it turns out that the exact opposite is the case:
   Fractional Seshadri constants do occur on
   all
   self-products $E\times E$
   except for the ones considered so far.
   The following theorem
   provides the complete picture:

\begin{introtheorem}\label{thm:intro-integer}
   Let $E$ be an elliptic curve with complex multiplication.
   Then the following conditions are equivalent:
   \begin{itemize}\compact
   \item[\rm(i)]
      For every ample line bundle $L$ on $E\times E$, the Seshadri
      constant $\eps(L)$ is an integer.
   \item[\rm(ii)]
      Either $\End(E)=\Z[i]$ or $\End(E)=\Z[\frac12(1+i\sqrt3)]$.
   \end{itemize}
\end{introtheorem}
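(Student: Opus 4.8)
The implication (ii)$\Rightarrow$(i) is the integrality theorem of Bauer--Schulz, so the new content is (i)$\Rightarrow$(ii), which I would prove in contrapositive form: assuming $\mathcal O:=\End(E)$ is an imaginary quadratic order different from $\Z[i]$ and from $\Z[\frac12(1+i\sqrt3)]$, I exhibit an ample line bundle $L$ on $A:=E\times E$ with $\eps(L)\notin\Z$. Throughout I use the standard dictionary: $\NS(A)$ is the lattice of Hermitian $2\times2$ matrices $H$ over $\mathcal O$ (diagonal entries in $\Z$, off-diagonal entry $b\in\mathcal O$) equipped with the Rosati involution, $L_H$ is ample iff $H$ is positive definite, $L_H^2=2\det H$, and the elliptic curves of $A$ are, up to translation, the curves $C_{\alpha,\beta}$ indexed by primitive pairs $(\alpha,\beta)\in\mathcal O^2$, with $L_H\cdot C_{\alpha,\beta}$ equal to the value of the form $H$ on the primitive vector $(\beta,-\alpha)$. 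Two consequences will drive the argument. First, $L_H\cdot C=1$ for some elliptic curve $C$ if and only if $H$ represents $1$, which in turn holds if and only if the polarized surface $(A,L_H)$ decomposes as a product of elliptic curves. Second, on an abelian surface $\eps(L)\le\sqrt{L^2}$, with equality unless $\eps(L)$ is computed by an irreducible curve $C$ with a point of multiplicity $m$ and $L\cdot C<m\sqrt{L^2}$; by the Hodge index theorem, adjunction ($C^2=2p_a(C)-2$), and the absence of rational curves, such a $C$ satisfies $m(m-1)\le C^2\le m^2-1$, so $C^2$ is even; in particular $m=2$ forces $C^2=2$, hence $[C]$ is the class of a principal polarization, and since $h^0([C])=1$ the curve $C$ is the unique effective divisor in its class, which is either a smooth genus-$2$ curve or a reducible union of two elliptic curves -- never an irreducible curve with a point of multiplicity $2$. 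Thus a Seshadri curve of multiplicity exactly $2$ can never occur.

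The construction is then to take $L$ with $L^2$ a small non-square. First I would try $L^2=2$: this means $H$ is a unimodular positive definite binary Hermitian form over $\mathcal O$, and if such an $H$ can be chosen so as \emph{not} to represent $1$ -- equivalently, if $A\cong E\times E$ carries an indecomposable principal polarization $L$ -- then $\eps(L)\notin\Z$. Indeed $\eps(L)\le\sqrt2$; if $\eps(L)<\sqrt2$ it is computed by an irreducible curve of multiplicity $m$, with $m=1$ excluded (no elliptic curve meets $L$ in one point), $m=2$ excluded outright, and $m\ge3$ giving $\eps(L)=\tfrac{L\cdot C}{m}\ge\sqrt{2-\tfrac2m}>1$; hence $\eps(L)\in\{\sqrt2\}\cup(1,2)$, which is not an integer. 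The same curve analysis shows, more generally, that any ample $L$ with $L^2=2$ or $L^2=4$ such that no elliptic curve meets $L$ in exactly one point -- and, when $L^2=4$, such that moreover $\eps(L)<\sqrt{L^2}$ -- has $\eps(L)\notin\Z$; this gives the flexibility needed to treat those orders $\mathcal O$ for which no indecomposable principal polarization exists.

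Everything therefore hinges on the arithmetic fact that for every imaginary quadratic order other than $\Z[i]$ and $\Z[\frac12(1+i\sqrt3)]$ one can find a positive definite binary Hermitian form $H$ over $\mathcal O$ of determinant $1$ -- or, failing that, of determinant $2$ -- whose minimum is at least $2$, i.e.\ which does not represent $1$. This is where the two exceptional orders drop out: reduction theory yields $\mu_1(H)^2\le\det(H)/(1-\rho^2)$ in terms of the covering radius $\rho$ of the lattice $\mathcal O\subset\C$, and for the square lattice $\mathcal O=\Z[i]$ ($\rho^2=\tfrac12$) and the hexagonal lattice $\mathcal O=\Z[\frac12(1+i\sqrt3)]$ ($\rho^2=\tfrac13$) this forces $\mu_1(H)=1$ for every unimodular $H$ -- precisely the mechanism behind the Bauer--Schulz integrality result -- whereas every other order has $\rho$ large enough, possibly only after passing to determinant $2$, to admit a Hermitian form avoiding the value $1$. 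I expect the main obstacle to be exactly this classification together with the verification that the associated $L$ genuinely has non-integral Seshadri constant: for the finitely many orders of small discriminant where the covering-radius bound is tight (notably $\mathcal O_{\Q(\sqrt{-7})}$, where no unimodular form works) one must exhibit a suitable form of determinant $2$ -- and, in that case, also produce the required sub-maximal Seshadri curve -- by direct computation.
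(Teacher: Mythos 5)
Your implication (ii)\,$\Rightarrow$\,(i) matches the paper (citation of Bauer--Schulz), but for (i)\,$\Rightarrow$\,(ii) you take a genuinely different route. The paper first proves a general structure theorem on arbitrary abelian surfaces (integrality of all Seshadri constants forces each $\eps(L)$ to be either $\sqrt{L^2}\in\Z$ or computed by an elliptic curve), via submaximal divisors from Pell's equation, symmetry of Seshadri curves, descent to the Kummer surface and the $(-2)$-curve relation $C^2-\sum m_i^2=-4$, plus parity of multiplicities at half-periods. It then constructs, for each $\sigma\notin\{i,\tfrac12(1+i\sqrt3)\}$, bundles $L_k$ whose intersection with \emph{every} class in $\NS(E\times E)$ is divisible by $2e$ while $\sqrt{L_k^2}<2e$ and $L_k^2$ is not a perfect square; this uniform divisibility trick, together with an elementary lemma about $6e^2-2ek^2$ not being a square, avoids all case-by-case arithmetic of Hermitian forms. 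Your replacement of the structure theorem by the small-$L^2$ analysis is attractive and, for $L^2=2$, complete and correct: the bounds $m(m-1)\le C^2\le m^2-1$ with $C^2$ even, the exclusion of $m=2$ via Weil's dichotomy for principal polarizations, and the conclusion $\eps(L)\in\{\sqrt2\}\cup(1,2)$ are all sound. So wherever $E\times E$ carries an indecomposable principal polarization your argument closes; indeed the paper's Prop.~2.5 produces exactly such polarizations for $\End(E)=\Z[\sqrt{-e}]$, $e\equiv2,3\ (\mathrm{mod}\ 4)$, by the same ``no elliptic curve meets $L$ in one point'' mechanism.

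The genuine gap is in the cases you yourself flag, and it is not a minor verification. By Hayashida--Nishi there are orders outside $\{\Z[i],\Z[\tfrac12(1+i\sqrt3)]\}$ --- notably the maximal orders of $\Q(\sqrt{-7})$ and $\Q(\sqrt{-15})$ --- for which \emph{every} unimodular positive definite Hermitian form represents $1$, so the $L^2=2$ construction is unavailable; and your classification of which orders (including the infinitely many non-maximal ones) require the fallback is only asserted via a covering-radius heuristic, not proved. For the $L^2=4$ fallback two things are missing: the existence of a determinant-$2$ form with minimum $\ge 2$, and, crucially, the submaximality $\eps(L)<\sqrt{L^2}=2$. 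The latter does not come for free --- $\sqrt{L^2}=2$ is an integer, so if $\eps(L)=2$ your candidate proves nothing (compare the paper's type-$(1,8)$ example where $\eps(L)=\sqrt{L^2}$ exactly) --- and producing the required submaximal singular curve is precisely the kind of work the paper's divisibility construction is designed to avoid: there one arranges $L^2$ to be a non-square, so $\eps(L)\le\sqrt{L^2}$ already rules out the maximal alternative, and only elliptic computing curves need to be excluded. Until the determinant-$2$ cases are carried out (form, submaximal curve, and the full list of orders needing them), the contrapositive is established only for a subfamily of the orders in question.
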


   For the proof of Theorem~\ref{thm:intro-integer} we will first
   establish how integrality is related to elliptic curves.
   One direction is obvious:
   If all Seshadri constants on a given abelian surface are computed by
   elliptic curves, then certainly those numbers are all
   integers. It is however less clear to what extent the converse statement holds
   true. The following theorem answers this question;
   it holds on any abelian surface, regardless of whether
   it splits as a product or not.

\begin{introtheorem}\label{thm:intro-computed-elliptic}
   Let $X$ be an abelian surface. The following conditions are
   equivalent:
   \begin{itemize}\compact
   \item[\rm(i)]
      For every ample
      line bundle $L$ on $X$, the Seshadri constant $\eps(L)$ is an
      integer.
   \item[\rm(ii)]
      For every ample
      line bundle $L$ on $X$, either $\eps(L)=\sqrt{L^2}$
      and $\sqrt{L^2}$ is an integer,
      or $\eps(L)$ is computed by an elliptic curve, i.e., there exists an
      elliptic curve $E\subset X$ such that
      \be
         \eps(L) = L\cdot E
         \,.
      \ee
   \end{itemize}
\end{introtheorem}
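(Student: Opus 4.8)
The implication $(ii)\Rightarrow(i)$ is immediate: if $\eps(L)=\sqrt{L^2}$ then by hypothesis this number is an integer, and if $\eps(L)=L\cdot E$ for an elliptic curve $E\subset X$, then $\eps(L)$ is the intersection number of a line bundle with a curve, hence again an integer. So the substance is $(i)\Rightarrow(ii)$. Assume $(i)$ and fix an ample line bundle $L$ on $X$. If $\eps(L)=\sqrt{L^2}$, then by $(i)$ the number $\sqrt{L^2}=\eps(L)$ is an integer and the first alternative of $(ii)$ holds; so from now on assume $\eps(L)<\sqrt{L^2}$. In this submaximal range it is known that $\eps(L)$ is computed by an irreducible curve, and since Seshadri constants on an abelian surface are independent of the point, we may translate so that this curve $C$ attains its maximal multiplicity $m:=\mult_0(C)$ at the origin, giving $\eps(L)=(L\cdot C)/m=:k$, which is an integer by $(i)$. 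On the blow-up $\pi\colon Y\to X$ of the origin, with exceptional curve $E_0$, the integral class $N:=\pi^*L-kE_0$ is nef with $N^2=L^2-k^2>0$ and satisfies $N\cdot\tilde C=0$ for the strict transform $\tilde C=\pi^*C-mE_0$; the Hodge index theorem on $Y$ therefore gives $\tilde C^2\le-1$, i.e.\ $C^2\le m^2-1$.

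If $m=1$ we finish at once: then $C^2\le 0$, while $C^2=2p_a(C)-2\ge 0$ because $X$ carries no rational curves (so $p_a(C)\ge p_g(C)\ge 1$); hence $C^2=0$ and $C$ is smooth of genus one, i.e.\ an elliptic curve, and $\eps(L)=L\cdot C$, so $(ii)$ holds with $E=C$. The case $m\ge 2$ is the crux. Here $C$ is singular, hence of geometric genus $\ge 2$ (an irreducible curve of geometric genus at most one on an abelian surface is a smooth elliptic curve, hence non-singular), so $C^2=2p_a(C)-2\ge 2+m(m-1)=m^2-m+2$; together with $C^2\le m^2-1$ this already forces $m\ge 3$. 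Next I would bring in the involution $\iota=(-1)_X$: the curve $\iota^*C$ again has an $m$-fold point at the origin and is numerically equivalent to $C$ (the involution acting trivially on $\NS(X)$), so if $\iota^*C\ne C$ the two distinct irreducible curves $C$ and $\iota^*C$ would meet at the origin with intersection multiplicity at least $m^2$, contradicting $C^2=C\cdot\iota^*C\le m^2-1$. Hence $C=\iota^*C$: the Seshadri curve is \emph{symmetric}, and its $m$-fold point lies at a $2$-torsion point of $X$.

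It remains to show that a symmetric $m$-fold Seshadri curve with $m\ge 3$ is incompatible with $(i)$; this, not what precedes it, is the real content. The plan is first to sharpen the estimate $m^2-m+2\le C^2\le m^2-1$ by analysing the involution $\tilde\iota$ that $\iota$ induces on the normalization $\tilde C$: since $\iota$ acts as $-1$ on the tangent space at the origin, hence trivially on tangent directions, it carries each analytic branch of $C$ at the origin to a branch with the same tangent line, so $\tilde\iota$ has a fixed point over the origin for each such branch, and Riemann--Hurwitz for $\tilde C\to\tilde C/\tilde\iota$ then constrains the genus of $\tilde C$, and with it $C^2$, very tightly. One then proceeds by a dichotomy. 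If $X$ carries an elliptic curve whose $L$-degree and $[C]$-degree are small enough, one shows that it already computes $\eps(L)$, so $(ii)$ holds. If $X$ carries no such elliptic curve, then the Seshadri constant $\eps([C])$ of the class of the Seshadri curve itself is pinned to the value $C^2/m$, which for $m\ge 3$ and every admissible $C^2\in\{m^2-m+2,\dots,m^2-1\}$ fails to be an integer — none of these values of $C^2$ is divisible by $m$, since the only multiples of $m$ in the range $[m^2-m,\,m^2]$ are its endpoints — contradicting $(i)$. The main obstacle is making the threshold in this dichotomy effective, and showing that in the first alternative the small elliptic curve genuinely computes $\eps(L)$; this is precisely where the finer structure theory of submaximal Seshadri constants on abelian surfaces, together with the full strength of $(i)$ (integrality of \emph{all} Seshadri constants, not merely of $\eps(L)$), has to be used.
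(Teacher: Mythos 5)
Your reduction is sound and in places sharper than the paper's: the Hodge-index bound $C^2\le m^2-1$, the genus estimate $C^2=2p_a(C)-2\ge m^2-m+2$ forcing $m\ge 3$, and the symmetry $C=(-1)^*C$ (which the paper obtains the same way, via \cite[Lemma~5.2]{Bauer:sesh-alg-sf} applied to $\O_X(C)$) are all correct. But the proof stops exactly where you say the real content begins, and what you offer there is a plan, not an argument. The decisive missing ingredient is the statement that the submaximal Seshadri curve $C$ \emph{computes its own Seshadri constant}, i.e.\ $\eps(\O_X(C))=C^2/\mult_0(C)$. You only assert this conditionally, inside an unjustified dichotomy ("if $X$ carries no small elliptic curve\dots"), and you concede that making the dichotomy effective is an open obstacle. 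This is not a detail: it is precisely the nontrivial input the paper imports as \cite[Prop.~1.2]{Bauer-Schulz} (any irreducible curve that is submaximal for some ample bundle computes the Seshadri constant of its own bundle, which is ample here since $C^2>0$). Without it, hypothesis (i) gives you no handle on $C^2/m$, and the contradiction never materializes. The proposed Riemann--Hurwitz analysis of the induced involution on the normalization is, as far as I can see, neither carried out nor needed.

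It is worth noting that once \cite[Prop.~1.2]{Bauer-Schulz} is invoked, your own numerical endgame finishes the proof immediately and more economically than the paper does: $\eps(\O_X(C))=C^2/m$ is a positive integer by (i), so $m\mid C^2$, yet the admissible range $m^2-m+2\le C^2\le m^2-1$ contains no multiple of $m$ (the nearest ones are $m^2-m$ and $m^2$). The paper instead descends $C$ to a $(-2)$-curve on the Kummer surface, uses the parity constraints on multiplicities of symmetric divisors at the sixteen half-periods to pin down $C^2-m^2\in\{-1,-4\}$, and rules out the surviving case $m=4$, $C^2=12$ via total symmetry; your route would bypass all of that, at the price of the adjunction/genus bound. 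So the skeleton is salvageable and even attractive -- but as written, the proof has a genuine gap at its central step.
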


   If one is interested in constructing
   explicit examples of line bundles with fractional Seshadri constants
   on products $E\times E$,
   then a natural approach is to look for irreducible principal
   polarizations on these surfaces.
   In other words, one asks under which circumstances
   $E\times E$ is the Jacobian
   of a smooth genus 2 curve.
   This question
   was first studied by
   Hayashida and Nishi \cite{Hayashida-Nishi:genus-two}
   in the case where the Endomorphism ring is the maximal order
   in the Endomorphism algebra.
   We extend their result in Prop.~\ref{prop:exists-principal-polarization}
   to cases which include non-maximal orders.

   Looking at Theorem~\ref{thm:intro-computed-elliptic}, one might be
   tempted to hope that an analogous equivalence might hold
   for \emph{each individual} line bundle. However, as we will
   show in Prop.~\ref{prop:primitive-lb},
   $\eps(L)$ can be an integer without this being accounted for
   by the conditions in (ii).

   We work throughout over the field of complex numbers.

%*****************************************************************************

\section{Background}

   We will use
   a number of previous results concerning Seshadri constants
   on abelian surfaces
   in the
   proof of Thm.~1 and Thm.~2.
   In this preliminary section
   we briefly review some of
   these results. For more general background on Seshadri constants
   also see \cite[Chapt.~5]{Lazarsfeld:pos} and \cite{Bauer-et-al:pimer}

\paragraph{\it Submaximal divisors.}
   For any ample line bundle $L$ an a smooth projective surface $X$
   and any point $x\in X$,
   one has the basic bound
   $\eps(L,x)\le\sqrt{L^2}$.
   A divisor $D$ on $X$ is called \emph{$L$-submaximal}
   at $x$ if its
   Seshadri quotient ${L\cdot D}/{\mult_0 D}$ is
   strictly smaller than $\sqrt{L^2}$.
   In other words, a divisor $D$ is $L$-submaximal if it forces $\eps(L,x)$ to be
   smaller than the theoretical upper bound $\sqrt{L^2}$.
   It is a crucial observation that
   if $D$ is an $L$-submaximal divisor that belongs to the
   linear series $|kL|$ for some integer $k\ge 1$, then
   every irreducible curve $C$ that is $L$-submaximal
   at $x$ must occur as a component of $D$.
   (see \cite[Lemma 5.2 and Lemma~6.2]{Bauer:sesh-alg-sf}).
   It is for this fact that
   submaximal divisors are in many cases instrumental
   to explicitly determining Seshadri constants.
   One such situation is as follows:
   Suppose that $C$ is an irreducible curve that is $L$-submaximal at
   $x$ for some ample line bundle $L$.
   If the line bundle $\O_X(C)$ is ample, then by \cite[Prop.~1.2]{Bauer-Schulz}
   the curve $C$ is also
   $\O_X(C)$-submaximal at $x$,
   and in fact $C$ \emph{computes} $\eps(\O_X(C),x)$ in the sense
   that
   \be
      \eps(\O_X(C),x) = \frac{\O_X(C)\cdot C}{\mult_x C}
      \,.
   \ee

\paragraph{\it Symmetric divisors on abelian surfaces.}
   Consider now an abelian surface $X$.
   A divisor $D$ on $X$ is \emph{symmetric}, if $D$
   is invariant under the involution $x\mapsto -x$.
   Similarly, a line bundle $L$ is \emph{symmetric},
   if $(-1)^*L=L$ in $\Pic(X)$.
   Symmetric line bundles enjoy important properties:
   The sixteen halfperiods on $X$ are
   divided into \emph{even} and \emph{odd} halfperiods
   (with respect to $L$), and if $D$ is a symmetric divisor
   with $\O_X(D)=L$, then the multiplicities
   of $D$ at either of these sets of halfperiods
   are all even or all odd.
   (See \cite[Sect.~4.7]{BL:CAV} for these facts and for
   further properties of symmetric line bundles.)

   Let $L$ be an ample line bundle on an abelian surface.
   As far as Seshadri constants are concerned we may assume
   that $L$ is symmetric, since $\eps(L)$
   depends only on the algebraic equivalence class
   and every algebraic equivalence class contains contains
   symmetric line bundles.
   It was proven in \cite{Bauer-Szemberg:appendix} that
   if $L$ is primitive and $\sqrt{L^2}$ is irrational, then
   $\eps(L)<\sqrt{L^2}$.
   This is shown by constructing a symmetric submaximal divisor,
   the
   \emph{Pell divisor}, $\Pell L$ in $|2kL|$ with multiplicity at least $2\ell$,
   where $(k,\ell)$ is the minimal solution of the Pell equation
   $\ell^2-L^2k^2=1$.
   One has
   \be
      \frac{L\cdot\Pell L}{\mult_0{\Pell L}}\le L^2\cdot\frac k\ell <\sqrt{L^2}
      \,.
   \ee

%*****************************************************************************

\section{Integral Seshadri constants}

   In this section we prove Theorem~\ref{thm:intro-computed-elliptic}.
   As mentioned in the previous section,
   one has $\eps(L)\le\sqrt{L^2}$ for any ample line
   bundle.
   We start by giving an example showing that
   in condition (ii) of the theorem it can in fact happen that
   $\eps(L)=\sqrt{L^2}$, even though $\eps(L)$ is not computed by an
   elliptic curve.

\begin{example}
   For any positive integer $n$ consider
   a polarized abelian surface $(X,L)$ of type
   $(1,2n^2)$ with $\rho(X)=1$.
   As $L^2=4n^2$ is a perfect square, one has
   $\eps(L)=\sqrt{L^2}=2n$
   by \cite[Prop.~1]{Steffens:remarks},
   but of course $\eps(L)$ is not
   computed by an elliptic curve, since there are no such curves
   on $X$.
\end{example}

\begin{proof}[Proof of Theorem~\ref{thm:intro-computed-elliptic}]
   The implication (ii) $\Rightarrow$ (i) being obvious, let us
   suppose (i). Assume by way of contradiction that there are
   ample line bundles $L$ on $X$ whose Seshadri constant is
   less than $\sqrt{L^2}$ and
   not
   computed by elliptic curves.
   Consider a primitive such line bundle $L$.
   Replacing $L$ by an algebraically equivalent line bundle,
   we may assume that $L$ is symmetric.
   We will now make use of the Pell divisor of $L$, i.e.,
   the divisor $D=\Pell L\in|2kL|$ with $\mult_0(D)\ge 2\ell$,
   where $(k,\ell)$ is the minimal solution of Pell's equation
   \be
      \ell^2 - 2d k^2 = 1
      \,,
   \ee
   having the property that
   \be
      \frac{L\cdot D}{\mult_0(D)} < \sqrt{L^2}
   \ee
   (see \cite{Bauer-Szemberg:appendix}).
   It follows from \cite[Lemma~5.2]{Bauer:sesh-alg-sf}
   that every irreducible curve computing $\eps(L)$ is a component of $D$.
   Let $C$ be one of these curves (which by
   assumption is not elliptic).
   As $C$ is sub-maximal for $L$,
   it follows from \cite[Prop.~1.2]{Bauer-Schulz} that
   $C$ computes its own Seshadri constant $\eps(\O_X(C))$.
   The curves $C$ and $(-1)^* C$ have the same multiplicity at
   the point $0$
   and they are algebraically equivalent.
   Therefore, by applying \cite[Lemma~5.2]{Bauer:sesh-alg-sf}
   to the bundle $\O_X(C)$, we see that
   these two curves must coincide, i.e., that $C$ is symmetric.
   So $C$ descends to a curve $\bar C$ on the smooth Kummer surface of $X$.
   With an argument as in the proof of
   \cite[Thm.~6.1]{Bauer:sesh-alg-sf}, this curve $\bar C$
   must be a $(-2)$-curve. (Otherwise $C$ would move in a
   pencil of $L$-submaximal curves, but there can only be
   finitely many of those.)
   The upshot of these considerations is that
   the multiplicities $m_i=\mult_{e_i}(C)$ of $C$ at the sixteen
   halfperiods $e_i$ of $X$ satisfy the equation
   \begin{equation}\label{eqn:minus-four}
      C^2 - \sum_{i=1}^{16} m_i^2 = -4
      \,.
   \end{equation}
   On the other hand, one has
   \begin{equation}\label{eqn:negative}
      C^2 - m_1^2 < 0
   \end{equation}
   (where $m_1=\mult_0(C)$), since otherwise
   \be
      \frac{L\cdot C}{m_1}
      \ge \frac{\sqrt{L^2}\sqrt{C^2}}{m_1}
      \ge \frac{\sqrt{L^2}\cdot m_1}{m_1}
      = \sqrt{L^2}
   \ee
   contradicting the fact that $C$ is submaximal for $L$.
   We claim now that
   \begin{equation}\label{eqn:differences}
      C^2 - m_1^2 = -1 \quad\mbox{or}\quad
      C^2 - m_1^2 = -4
      \,.
   \end{equation}
   For the proof of \eqnref{eqn:differences},
   note first that,
   by \eqnref{eqn:minus-four} and
   \eqnref{eqn:negative}, the only other possibilities for
   $C^2-m_1^2$ are $-2$ and $-3$.
   In the case where $C^2-m_1^2=-2$,
   we see that the number $m_1$ must be even and
   we have $\sum_{i=2}^{16}m_i^2=2$ by \eqnref{eqn:minus-four}. So there are
   exactly two half-periods at which $C$ has odd multiplicity.
   But this cannot happen since a symmetric divisor can only have $4,6,10$ or $12$
   half-periods with odd multiplicity (see  \cite[Sect.~2, Cor.~3]{Mumford:EDAV1} or \cite[Prop.~4.7.5]{BL:CAV}).
   In the other case, $C^2-m_1^2=-3$, the number $m_1$ is odd and
   we have $\sum_{i=2}^{16}m_i^2=1$ by \eqnref{eqn:minus-four}.
   This leads to the same kind of contradiction as before.

   We know that $C$ computes its own Seshadri constant, i.e.,
   \be
      \eps(\O_X(C))=\frac{C\cdot C}{\mult_0 C}
      \,.
   \ee
   But by \eqnref{eqn:differences}, this number
   equals
   \be
      \frac{m_1^2 - s}{m_1} = m_1 - \frac s{m_1}
   \ee
   where $s=1$ or $s=4$.
   As by assumption $\eps(\O_X(C))$ is a positive integer,
   this means that necessarily
   \be
       m_1=4 \quad\mbox{and}\quad C^2=12
       \,.
   \ee
   In this case
   the multiplicities $m_i$ at the non-zero half-periods are all
   zero.
   So in particular, all
   multiplicities $m_i$ are even.
   Therefore the line bundle $\O_X(C)$ is totally
   symmetric, and hence it is a square of another line bundle
   (see \cite[Sect.~2, Cor.~4]{Mumford:EDAV1}).
   But because of $C^2=12$ this is impossible. So we have
   arrived at a
   contradiction, and this completes the proof of the theorem.
\end{proof}

%*****************************************************************************

\section{Products of elliptic curves with complex multiplication}

   In this section we prove Theorem~\ref{thm:intro-integer}.
   Let $E$ be an elliptic curve that has complex multiplication,
   i.e., $\End(E)\tensor\Q=\Q(\sqrt{d})$ for some square-free
   integer $d<0$.
   The endomorphism ring $\End(E)$ is then an order in $\Q(\sqrt{d})$,
   and hence it is of the form
   \be
      \End(E)\isom\Z[f\omega]
   \ee
   where $f$ is a positive integer and
   \be
      \omega=
      \begin{bycases}
         \sqrt d            & \mbox{ if } d\equiv 2,3 \tmod 4 \\
         \tfrac12(1+\sqrt d) & \mbox{ if } d\equiv 1 \tmod 4 \,.
      \end{bycases}
   \ee
   It turns out that for our purposes it is more practical
   to use an equivalent but slightly different description:
   We write
   $\End(E)=\Z[\sigma]$, where
   \be
      \sigma                & = & \sqrt{-e}             & \mbox{ with } e \in \mathbb N \\
      \mbox{or}\quad \sigma & = & \tfrac12(1+\sqrt{-e}) & \mbox{ with } e \in \mathbb N \mbox{ and } e\equiv 3 \tmod 4
      \,.
   \ee
   On the product surface $E\times E$, denote by $F_1=\{0\}\times E$ and $F_2=E\times \{0\}$ the fibers of the
   projections, by $\Delta$ the diagonal, and by
   $\Gamma$ the graph of the endomorphism corresponding to
   $\sigma$.
   The classes of these four curves form a basis of
   $\NS(E\times E)$
   (see
   \cite[Thm.~22]{Weil:var-ab}
   or \cite[Thm.~11.5.1]{BL:CAV}).

\begin{proposition}\label{prop:intersection-matrix}
   Let $E$ be an elliptic curve with complex multiplication.
   Write
   $\End(E)=\Z[\sigma]$ with $\sigma$ as above.
   Then the intersection matrix of $(F_1,F_2,\Delta,\Gamma)$ is
   \be
      \left(
      \begin{array}{cccc}
         0 & 1 & 1 & 1 \\
         1 & 0 & 1 & |\sigma|^2 \\
         1 & 1 & 0 & |1-\sigma|^2 \\
         1 & |\sigma|^2 & |1-\sigma|^2 & 0
      \end{array}
      \right)
      \,.
   \ee
\end{proposition}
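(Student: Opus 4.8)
The plan is to compute all ten pairwise intersection numbers of $F_1,F_2,\Delta,\Gamma$ directly, splitting the work into three groups: the four diagonal entries, the off-diagonal entries not involving $\Gamma$, and the two entries $\Gamma\cdot F_2$ and $\Gamma\cdot\Delta$, which carry all the actual content.

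For the diagonal I would invoke adjunction: each of $F_1$, $F_2$, $\Delta$ and $\Gamma$ is a smooth curve of genus $1$ inside the abelian surface $X=E\times E$, and since $K_X$ is trivial the adjunction formula gives $C^2 = 2g(C)-2-K_X\cdot C = 0$ in each case. (Alternatively, each of these curves is a fibre of a fibration $X\to E$, hence moves in a family of pairwise disjoint curves.) For the off-diagonal entries not involving $\Gamma$, I would read them off from the set-theoretic picture: $F_1=\{0\}\times E$ and $F_2=E\times\{0\}$ meet only at the origin and transversally there, so $F_1\cdot F_2=1$; likewise $\Delta$ meets each $F_i$ transversally in the single point $0$, so $\Delta\cdot F_1=\Delta\cdot F_2=1$; and writing $\psi\colon E\to E$ for the endomorphism with graph $\Gamma$ (so $\psi$ corresponds to $\sigma$ under $\End(E)=\Z[\sigma]$), the intersection $\Gamma\cap F_1$ is the single transverse point $(0,\psi(0))=0$, giving $\Gamma\cdot F_1=1$.

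The remaining two entries are the substance. Set-theoretically $\Gamma\cap F_2 = \ker\psi\times\{0\}$, while $\Gamma\cap\Delta$ is in bijection with $\ker(\psi-\mathrm{id}_E)$. Since we work over $\C$, every isogeny of $E$ is \'etale, so in both cases $\Gamma$ meets the other curve transversally at each point of intersection (the tangent line to $\Gamma$ at such a point is the graph of the isomorphism $d\psi_x$, resp.\ $d(\psi-\mathrm{id}_E)_x$, which is complementary to the tangent line of $F_2$, resp.\ $\Delta$). Hence $\Gamma\cdot F_2 = \#\ker\psi = \deg\psi$ and $\Gamma\cdot\Delta = \#\ker(\psi-\mathrm{id}_E) = \deg(\psi-\mathrm{id}_E)$; here I use that $\sigma\neq 0$ and $\sigma\neq 1$ for $\sigma$ as chosen, so that these maps are genuine isogenies. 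To finish I would invoke that, under the identification $\End(E)\subset\C$, the degree of the endomorphism attached to a nonzero $\tau$ equals its norm $N(\tau)=\tau\bar\tau=|\tau|^2$, because the rational representation of $\tau$ is multiplication by the complex number $\tau$ on $\C\cong\mathbb R^2$, a real $2\times 2$ matrix of determinant $|\tau|^2$ (see e.g.\ \cite[Sect.~1.2]{BL:CAV}). Applying this with $\tau=\sigma$ and $\tau=\sigma-1$ gives $\Gamma\cdot F_2=|\sigma|^2$ and $\Gamma\cdot\Delta=|1-\sigma|^2$, which completes the matrix.

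The only real obstacle here is bookkeeping: one must check that every intersection point counted above carries multiplicity exactly one, and this is precisely where working over $\C$ is used, since it makes the relevant isogenies $\psi$ and $\psi-\mathrm{id}_E$ separable, hence \'etale. The one genuinely non-formal ingredient is the identity $\deg(\tau)=|\tau|^2$ on $\End(E)$, which is where the complex-multiplication hypothesis and the normalisation $\End(E)=\Z[\sigma]$ enter.
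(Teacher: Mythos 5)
Your proposal is correct and follows essentially the same route as the paper: diagonal entries vanish because the four curves are elliptic, the entries $1$ come from transverse intersections at the origin, and $\Gamma\cdot F_2=\deg\sigma=|\sigma|^2$. The only (minor) divergence is at $\Delta\cdot\Gamma$: the paper counts the fixed points of $\sigma$ via the Holomorphic Lefschetz Fixed-Point Formula, whereas you identify them with $\ker(\sigma-\mathrm{id})$ and reuse the degree-equals-norm fact, which is a slightly more self-contained justification of the same count.
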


\begin{proof}
   All four curves are elliptic, so we have
   \be
      F_1^2=F_2^2=\Delta^2=\Gamma^2=0
      \,.
   \ee
   As each curve intersects the other ones transversely,
   it is enough to count the number of intersection points.
   So we have
   \be
      F_1\cdot F_2=F_1\cdot \Delta = F_1\cdot\Gamma=F_2\cdot \Delta=1
      \,,
   \ee
   since these curves intersect only in the origin.
   For $F_2$ and $\Gamma$ one has
   \be
      F_2\cdot\Gamma=\#\{(x,0)\,|\,x\in E\}\cap\{(x,\sigma x)\,|\,x\in E\}
      \,,
   \ee
   and this shows that we need to count the
   number of solutions $x\in E$ of the equation $\sigma x = 0$.
   But this number equals the degree of the isogeny $\sigma: E\to E$,
   and so we get
   \be
      F_2\cdot\Gamma = \deg \sigma = |\sigma|^2
      \,.
   \ee
   Finally, for $\Delta$ and $\Gamma$ we have
   \be
      \Delta\cdot\Gamma=\#\{(x,x)\,|\,x\in E\}\cap\{(x,\sigma x)\,|\,x\in E\}
      \,,
   \ee
   and this is the number of fixed points of the isogeny $\sigma$.
   Hence by the Holomorphic Lefschetz Fixed-Point Formula \cite[Thm.~13.1.2]{BL:CAV}, we have
   \be
      \Delta\cdot\Gamma=\# \Fix(\sigma) = |1-\sigma|^2
      \,,
   \ee
   and this concludes the proof of the proposition.
\end{proof}

   We will need an explicit description of the ample cone of $E\times E$:

\begin{proposition}\label{prop:ample-cone}
   Let $E$ be an elliptic curve with complex multiplication.
   Write $\End(E)=\Z[\sigma]$ as above.
   A line bundle
   \be
      L=\mathcal O_{E\times E}(a_1F_1+a_2F_2+a_3\Delta +a_4\Gamma)
   \ee
   is ample if and only if the two inequalities
   \be
      a_1+a_2+2a_3+(|\sigma|^2+1)a_4                                     & > & 0 \\
      a_1a_2+a_1a_3+a_1a_4+a_2a_3+|\sigma|^2a_2a_4+|1-\sigma|^2a_3a_4 & > & 0
   \ee
   are satisfied.
\end{proposition}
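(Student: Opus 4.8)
The plan is to invoke the standard description of the ample cone of an abelian surface and then to read off the two inequalities directly from the intersection matrix of Proposition~\ref{prop:intersection-matrix}. Since an abelian surface carries no irreducible curve of negative self-intersection — a generic translate of an irreducible curve $C$ is again an irreducible curve meeting $C$ properly, whence $C^2\ge 0$ — the Nakai--Moishezon criterion together with the Hodge index theorem shows that a class $L\in\NS(E\times E)$ is ample if and only if $L^2>0$ and $L\cdot H>0$ for one fixed ample class $H$. Indeed, fixing such an $H$, the set $\mathcal C^+=\set{x\with x^2>0,\ x\cdot H>0}$ is one of the two connected components of the positive cone, and any two classes lying in $\mathcal C^+$ have strictly positive mutual intersection. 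So if $L^2>0$ and $L\cdot H>0$, then $L\cdot C>0$ for every irreducible curve $C$: when $C^2>0$ this holds because $C\in\mathcal C^+$; when $C^2=0$ it follows by letting $t\to 0^+$ in $L\cdot(C+tH)>0$, the limit being nonzero because the intersection form is negative definite on $L^\perp$.

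I would then use the explicit ample class $H=F_1+F_2$. This is ample because $F_1$ and $F_2$ are the pullbacks of the ample point class on the factor $E$ along the two projections, and the exterior tensor product of ample line bundles on a product of projective varieties is ample; equivalently, $F_1+F_2$ is the product principal polarization on $E\times E$. Expanding $L=a_1F_1+a_2F_2+a_3\Delta+a_4\Gamma$ against the matrix of Proposition~\ref{prop:intersection-matrix} gives
\[
   L^2 = 2\bigl(a_1a_2+a_1a_3+a_1a_4+a_2a_3+|\sigma|^2a_2a_4+|1-\sigma|^2a_3a_4\bigr),
\]
so that $L^2>0$ is precisely the second displayed inequality. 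Reading off the first two rows of the matrix,
\[
   L\cdot H = L\cdot F_1+L\cdot F_2 = (a_2+a_3+a_4)+(a_1+a_3+|\sigma|^2a_4) = a_1+a_2+2a_3+(|\sigma|^2+1)a_4,
\]
which is precisely the first displayed inequality. Combining the two with the criterion of the first paragraph finishes the proof.

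The computations are routine, so the only real content is the criterion in the first paragraph. The one point that needs a little care is the boundary case $C^2=0$, i.e.\ curves $C$ that are translates of elliptic subgroups of $E\times E$, where one must appeal to the Hodge index theorem (rather than the reverse Cauchy--Schwarz inequality) to obtain $L\cdot C>0$; and one should note explicitly that no further inequality — such as $L\cdot F_1>0$ on its own — is needed, since the two stated conditions already place $L$ in the open cone $\mathcal C^+$, which is exactly what ampleness demands.
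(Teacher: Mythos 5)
Your proof is correct and takes essentially the same route as the paper: reduce ampleness to the two conditions $L^2>0$ and $L\cdot(F_1+F_2)>0$ and read both quantities off the intersection matrix of Proposition~\ref{prop:intersection-matrix}, whose expansions you compute correctly. The only difference is that the paper simply cites the refined Nakai--Moishezon criterion for abelian varieties (\cite[Cor.~4.3.3]{BL:CAV}) where you rederive it from the Hodge index theorem and the absence of negative curves on an abelian surface; that derivation, including the careful treatment of the boundary case $C^2=0$, is sound.
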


\begin{proof}
   This follows from the fact that a line bundle $L$ is ample if and only
   if both $L^2$ and the intersection of $L$ with the ample line bundle
   $\mathcal{O}_{E\times E}(F_1+F_2)$ are positive
   (by the improvement of the Nakai-Moishezon criterion
   valid on abelian varieties
   \cite[Cor.~4.3.3]{BL:CAV}).
\end{proof}

   Next, we apply a change of basis to the Néron--Severi group
   to make calculations easier by choosing two basis elements
   which are orthogonal to $F_1$ and $F_2$.
   We define $\nabla:= \Delta - F_1 - F_2$ and $\Sigma:= \Gamma - |\sigma|^2F_1 - F_2$.
   The intersection matrix of $(F_1,F_2,\nabla,\Sigma)$ is then
   \be
      \left(
         \begin{array}{cccc}
            0 & 1 & 0 & 0 \\
            1 & 0 & 0 & 0 \\
            0 & 0 & -2 & -2 \Re(\sigma) \\
            0 & 0 & -2 \Re(\sigma) & -2|\sigma|^2
         \end{array}
      \right)
      \,.
   \ee
   In terms of this new basis, the ampleness condition
   for a line bundle
   \be
      L=\mathcal O_{E\times E}(a_1F_1+a_2F_2+a_3\nabla+a_4\Sigma)
   \ee
   is expressed by the two inequalities
   \be
      a_1+a_2                                        & > & 0 \\
      a_1a_2-a_3^2-|\sigma|^2a_4^2-2\Re(\sigma)a_3a_4 & > & 0 \,.
   \ee

   It was shown in \cite{Bauer-Schulz} that
   if $\End(E)=\Z[i]$ or $\End(E)=\Z[\frac12(1+i\sqrt3)]$, then
   the Seshadri constants on $E\times E$ are computed by elliptic
   curves, and hence they are integers.
   We now show that in all other cases there exist ample line
   bundles on $E\times E$,
   whose Seshadri constants cannot be computed by elliptic
   curves. With Theorem~\ref{thm:intro-computed-elliptic}
   this will imply that there are line bundles with fractional
   Seshadri constants on the surface.

\begin{proposition}\label{prop:exist-fractional}
   Let $E$ be an elliptic curve with complex multiplication.
   Write $\End(E)=\Z[\sigma]$ as above.
   If $\sigma\notin\set{i, \frac12(1+i\sqrt3)}$,
   then there exist ample line bundles $L$ on $E\times E$
   such that $\sqrt{L^2}$ is not an integer
   and such that $\eps(L)$ is not computed by an elliptic curve.
\end{proposition}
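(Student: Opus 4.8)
The plan is to take $L$ to be an \emph{irreducible} principal polarization on $E\times E$. Any such bundle has $L^2=2$, so $\sqrt{L^2}=\sqrt2$ is not an integer, and the substance of the proof is thus to show that $\eps(L)$ cannot be computed by an elliptic curve.

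First I would produce an irreducible principal polarization on $E\times E$. Under the hypothesis $\sigma\notin\set{i,\frac12(1+i\sqrt3)}$ — which amounts to saying that $\End(E)^\times=\set{\pm1}$, i.e.\ that $E$ has no automorphism other than $\pm\mathrm{id}$ — this is exactly what Prop.~\ref{prop:exists-principal-polarization} is meant to supply. Concretely, one exhibits a class in the basis $(F_1,F_2,\Delta,\Gamma)$, computes $L^2$ and $L\cdot(F_1+F_2)$ from the intersection matrix of Prop.~\ref{prop:intersection-matrix}, checks via the criterion of Prop.~\ref{prop:ample-cone} that it is ample with $L^2=2$, and verifies that the associated theta divisor is irreducible, i.e.\ that the polarization is indecomposable. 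This last point is precisely the Hayashida--Nishi type analysis, carried out so as to cover non-maximal orders.

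Next, recall that an indecomposable principally polarized abelian surface is the Jacobian of a smooth genus-two curve $C$, so that $(E\times E,L)\isom(\mathrm{Jac}(C),\Theta_C)$ (see \cite[Ch.~12]{BL:CAV}). Suppose now, for contradiction, that $\eps(L)$ is computed by an elliptic curve $E'\subset E\times E$. Then $\eps(L)=L\cdot E'$ is a positive integer, and since $\eps(L)\le\sqrt{L^2}=\sqrt2<2$ this forces $L\cdot E'=1$. But an abelian surface carrying a principal polarization $\Theta$ together with an elliptic curve $E'$ satisfying $\Theta\cdot E'=1$ splits, as a polarized abelian variety, into a product of two elliptic curves with the product polarization (\cite[Ch.~12]{BL:CAV}); in particular $\Theta$ would be reducible, contradicting the indecomposability of $L$. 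Hence $\eps(L)$ is not computed by an elliptic curve, as required. (Alternatively one may invoke the known value $\eps(L)=\tfrac43$ for the principal polarization of a genus-two Jacobian: this is not an integer, hence not of the form $L\cdot E'$.)

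The main obstacle is the first step — guaranteeing that $E\times E$ genuinely carries an irreducible principal polarization for \emph{every} CM curve $E$ outside the two exceptional ones, including all the non-maximal orders $\End(E)=\Z[\sigma]$, and that the arithmetic condition under which Prop.~\ref{prop:exists-principal-polarization} produces such a polarization is implied by $\sigma\notin\set{i,\frac12(1+i\sqrt3)}$ with no residual exceptions. Once this existence statement is available, the deduction above is purely formal; combining it with Theorem~\ref{thm:intro-computed-elliptic} then yields that $E\times E$ actually carries ample line bundles with fractional Seshadri constants.
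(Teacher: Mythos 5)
There is a genuine gap, and it is exactly at the point you flag as ``the main obstacle'': the irreducible principal polarization you want to use does \emph{not} exist for every CM curve outside the two exceptional ones. The paper itself records the relevant counterexamples: by Hayashida--Nishi, when $\End(E)$ is the maximal order of $\Q(\sqrt{-m})$, a smooth genus-two curve (equivalently, an irreducible principal polarization) exists on $E\times E$ if and only if $m\notin\set{1,3,7,15}$. So for $\sigma=\frac12(1+\sqrt{-7})$ and $\sigma=\frac12(1+\sqrt{-15})$ the surface $E\times E$ carries \emph{no} irreducible principal polarization whatsoever, yet these $\sigma$ satisfy the hypothesis of Proposition~\ref{prop:exist-fractional}. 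Moreover, Proposition~\ref{prop:exists-principal-polarization} only treats the orders $\Z[\sqrt{-e}]$ with $e\equiv 2,3\pmod 4$; it says nothing about the orders $\Z[\frac12(1+\sqrt{-e})]$, so it cannot be upgraded to cover all cases -- the obstruction above shows no such upgrade is possible. (The condition $\sigma\notin\set{i,\frac12(1+i\sqrt3)}$ is also not the same as ``$\End(E)^\times=\set{\pm1}$'' playing the role of an existence criterion for genus-two curves; the actual criterion is the arithmetic one of Hayashida--Nishi.) The second half of your argument -- that an irreducible principal polarization, once it exists, cannot have its Seshadri constant computed by an elliptic curve, either via the splitting criterion or via Steffens' value $\eps(L)=\frac43$ -- is correct, but it only establishes the proposition for the subfamily covered by Proposition~\ref{prop:exists-principal-polarization}.

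The paper's proof takes a different and more robust route that avoids principal polarizations entirely: it constructs, in each case, line bundles $L_k$ (in the orthogonalized basis $F_1,F_2,\nabla,\Sigma$) whose intersection number with \emph{every} class in $\NS(E\times E)$ is divisible by $2e$, so that in particular $L_k\cdot N\ge 2e$ for every elliptic curve $N$. It then chooses $k$ in the interval $(\sqrt e,\sqrt{3e})$ so that $L_k$ is ample with $\sqrt{L_k^2}<2e$ (hence no elliptic curve can compute $\eps(L_k)$) and so that $L_k^2=6e^2-2ek^2$ is not a perfect square (hence $\sqrt{L_k^2}\notin\Z$); Lemma~\ref{lemma:square} plus a finite check handles the existence of such $k$. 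If you want to salvage your approach you would have to replace ``principal polarization'' by this kind of divisibility argument in the cases $\sigma=\frac12(1+\sqrt{-e})$, which is essentially what the paper does.
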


\begin{proof}
   Our strategy is to exhibit ample line bundles $L$ whose
   intersection number with any nef line bundle -- and therefore
   in particular with every elliptic curve -- is bigger than
   $\sqrt{L^2}$. For such $L$, the Seshadri constant
   cannot be computed by an elliptic curve,
   since $\eps(L)\le \sqrt{L^2}$
   (see \cite[Prop.~5.1.9]{Lazarsfeld:pos}).

   We first treat the case $\sigma=\sqrt{-e}$ with $e\neq 1$.
   For $k\in\Z$ consider the line bundle
   \be
      L_k:=\mathcal{O}_{E\times E} (2e\,F_1 + 2e\,F_2 + e\,\nabla + k\,\Sigma)\,.
   \ee
   As $L_k\cdot (F_1+F_2) = 4e$, the line bundle $L_k$ is ample
   if and only if $L_k^2=6e^2-2ek^2>0$.
   (This is a consequence of \cite[Cor.~4.3.3]{BL:CAV}.)
   Let $M$ be an arbitrary line bundle numerically written as
   $M\equiv a_1F_1+a_2F_2+a_3\nabla+a_4\Sigma$.
   Then the intersection number of $L_k$ and $M$ is given by
   \be
      L_k\cdot M = 2ea_2 + 2ea_1 - 2ea_3 - 2eka_4
      \,.
   \ee
   The crucial point in this construction is that
   $L_k\cdot M$ is a multiple of $2e$.
   So in particular the intersection number of $L_k$ with any elliptic curve
   on $E\times E$ is at least $2e$, if $L_k$ is ample.
   We will show that we can choose $k$ in such a way
   that
   \begin{itemize}\compact
   \item[(i)]
      $L_k$ is ample and $\sqrt{L_k^2}<2e$,
   \item[(ii)]
      $L_k^2$ is not a perfect square.
   \end{itemize}
   If this is achieved, then we have an ample line bundle as claimed in the proposition.

   Turning to the proof of that claim, note that (i)
   is equivalent to the condition that $k$ lies in the interval $(\sqrt{e},\sqrt{3e})$.
   So we have to show that if $e\ne 1$ then
   this interval contains an integer $k$ such that also condition (ii) is fulfilled.
   The subsequent Lemma \ref{lemma:square} shows that
   if $L_k^2$ is a perfect square, then $L_{k+1}^2$ cannot be a perfect square.
   As the interval $(\sqrt{e}, \sqrt{3e})$
   contains at least two integers when $e\ge 6$,
   we are thus reduced to treating
   the range $2\leq e \leq 5$.
   For these values of $e$ we can do explicit calculations, which
   show that integers $k$ as required exist:
   \begin{center}
     \begin{tabular}{c|c c c c}
       $e$& $2$ & $3$ & $4$ & $5$ \\
       \hline
       $k$ & $2$ & $2$ & $3$ & $3$ \\
       $L_k^2$ & $8$ & $30$ & $24$ & $60$ \\
      \end{tabular}
   \end{center}

   Now we treat the second case, i.e., $\sigma=\tfrac12(1+\sqrt{-e})$ with
   $ e\equiv 3 \tmod 4$ and $e\neq 3$.
   In this case, we consider for odd $k\in\Z$ the line bundles
   \be
      L_k:=\mathcal{O}_{E\times E}(2e\,F_1 + 2e\,F_2 + (e-k)\,\nabla + 2k\,\Sigma)\,.
   \ee
   Analogously to the case before, $L_k$ is ample if and only if
   $L_k^2=6e^2-2ek^2>0$. Since $k$ is odd,
   it follows that the intersection number of $L_k$ and $M$, which is given by
   \be
      L_k\cdot M = 2ea_2 + 2ea_1 - 2ea_3 - e(k+1)a_4
      \,,
   \ee
   is a multiple of $2e$.
   If $e\ne 3$, then
   it is possible to choose an integer
   $k\in (\sqrt{e},\sqrt{3e})$. This ensures that $L_k$ is ample and that $\sqrt{L_k^2}<2e$.
   (Note that the interval does not contain an odd integer for $e=3$.)

   By the subsequent Lemma \ref{lemma:square} we know that if $L_k^2$ is a perfect square
   then $L_{k+2}^2$ is not.
   Since the interval $(\sqrt{e}, \sqrt{3e})$ contains at least four integers when $e\ge 30$,
   we are reduced to the cases $7\leq e \leq 27$.
   We finish the proof by providing explicit values of $k$ for the remaining six cases:
   \begin{center}
      \begin{tabular}{c|c c c c c c}
         $e$ & $7$ & $11$ & $15$ & $19$ & $23$ & $27$ \\ \hline
         $k$ & $3$ & $5$ & $5$ & $5$ & $5$ & $7$ \\
         $L_k^2$ & $168$ & $176$ & $600$ & $1216$ & $2024$ & $1728$ \\
      \end{tabular}
   \end{center}
   This completes the proof of the proposition.
\end{proof}

\begin{lemma}\label{lemma:square}
   Let $e\ge 2$ be an integer and let $e=m^2\,n$ be its unique representation
   as a product of a square and a square-free integer. For positive integers $k$
   we define $A_k:=6e^2-2ek^2$. Then either $A_k$ or $A_{k+1}$
   is not a perfect square, and if furthermore $n\ge 3$, then either $A_k$ or $A_{k+2}$
   is not a perfect square.
\end{lemma}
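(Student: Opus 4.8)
The plan is to reformulate the statement ``$A_k$ is a perfect square'' in purely arithmetic terms and then extract a divisibility or parity obstruction on $k$. First I would write $2e = c\,h^2$ with $c$ squarefree. Since $A_k = c\,h^2(3e-k^2)$ and a squarefree integer divides a perfect square $z^2$ only if it divides $z$, this gives the equivalence: $A_k$ is a perfect square if and only if $3e-k^2\ge 0$ and $3e-k^2 = c\,t^2$ for some integer $t\ge 0$. The next step is to determine $c$ from $e=m^2n$ with $n$ squarefree: if $n$ is odd, then $2n$ is squarefree, so $c=2n$ and $h=m$; if $n$ is even, write $n=2n'$ with $n'$ odd and squarefree, so $2e=(2m)^2 n'$ and hence $c=n'$ and $h=2m$. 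In both cases the odd part of $c$, which I will denote $n_0$ (so $n_0=n$ if $n$ is odd and $n_0=n/2$ if $n$ is even), is squarefree; moreover $n\ge 3$ forces $n_0\ge 3$. The key consequence I would record is: since $k^2=3e-c\,t^2$ is a multiple of $n_0$ (both $3e$ and $c$ being multiples of $n_0$), \emph{if $A_k$ is a perfect square then $n_0\mid k$.}

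For the first assertion I argue by contradiction. Suppose both $A_k$ and $A_{k+1}$ are perfect squares, so $3e-k^2=c\,t^2$ and $3e-(k+1)^2=c\,s^2$. If $c$ is even -- equivalently $n$ is odd and $c=2n$ -- then subtracting the two equations yields $2k+1=2n(t^2-s^2)$, which is absurd since the left-hand side is odd. If $c$ is odd -- equivalently $n$ is even -- then the consequence above gives $c\mid k$ and $c\mid k+1$, so $c\mid 1$ and thus $c=1$; this means $n=2$ and $e=2m^2$, in which case $A_k$ can be a perfect square only if $k^2+t^2=6m^2$, and this equation has no solution because $3$ divides $6m^2$ to an odd power (so $6m^2$ is not a sum of two squares; alternatively, run an infinite descent modulo $3$). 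Hence in each case the assumption leads to a contradiction.

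For the second assertion, assume $n\ge 3$, so $n_0\ge 3$, and suppose both $A_k$ and $A_{k+2}$ are perfect squares. Then $n_0\mid k$ and $n_0\mid k+2$ by the consequence above, so $n_0\mid 2$; since $n_0$ is odd this forces $n_0=1$, contradicting $n_0\ge 3$.

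The only point I expect to require real care is the opening reduction: when extracting the squarefree part of $2e$ one has to watch the prime $2$, since the outcome genuinely depends on the parity of the squarefree part $n$ of $e$ (this is precisely what makes the ``$n$ odd'' and ``$n$ even'' cases behave differently), and the degenerate subcase $n=2$, where the divisibility argument is vacuous, has to be disposed of separately via the sum-of-two-squares observation. After that, the remaining arguments are elementary.
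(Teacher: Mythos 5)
Your proof is correct. The core mechanism coincides with the paper's in the main case: if $A_k$ is a square, then a suitable squarefree quantity attached to $e$ must divide $k$, which is incompatible with the same quantity dividing $k+1$ (or $k+2$). The difference lies in the bookkeeping around the prime $2$. The paper factors $A_k=m^2n\,(6m^2n-2k^2)$ and splits into the cases $n\ge 3$, $n=2$, $n=1$, disposing of the last two by explicit congruences modulo $4$. You instead extract the squarefree part $c$ of $2e$ once and for all, so that the case $c$ even (i.e.\ all odd $n$, including $n=1$) collapses to the one-line parity contradiction $2k+1=c(t^2-s^2)$, while the case $c$ odd reduces to $n=2$, which you settle with the sum-of-two-squares theorem -- in fact proving the stronger statement that $A_k$ is \emph{never} a square when $n=2$. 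All the supporting claims check out (in particular $n_0\mid 3e$, $n_0\mid c$, and the step from $n\ge 3$ to $n_0\ge 3$, which uses that an even squarefree $n\ge 3$ is at least $6$). Both arguments are elementary; yours is somewhat more uniform, while the paper's avoids invoking the two-squares theorem at the cost of two separate mod-$4$ computations.
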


\begin{proof}
   First, we treat the case $n\ge 3$. Suppose that $A_k=m^2n\,(6m^2n-2k^2)$ is a perfect square.
   Since $n$ is square-free, it follows that $6m^2n-2k^2 = nr^2$ for an integer
   $r$. We deduce that either $n$ or $n/2$ is a divisor of $k^2$, and hence
   it is a divisor of $k$. Consequently, neither $A_{k+1}$ nor $A_{k+2}$ can be a perfect square,
   because $n$ and $n/2$, respectively, cannot be a divisor of $k+1$ and $k+2$.

   Next, we consider the case $n=2$. Suppose that $A_k=4m^2\,(6m^2-k^2)$ is a perfect square. Then
   the factor $6m^2-k^2$ must itself be a perfect square, and
   this implies that
   the equation
   $k^2+r^2=6m^2$ has an integral solution $(k,r,m)$.
   By cancelling common factors we then find also
   a solution with $\gcd(k,r)=1$.
   We will now obtain a contradiction by
   considering the equation modulo 8:
   On the one hand,
   $r^2+k^2$ is either $1$, $2$ or $5$ modulo 8,
   and on the other hand $6m^2$ is either $0$ or $6$ modulo 8.

   Finally, we treat the case $n=1$. Suppose that $A_k=m^2\,(6m^2-2k^2)$ is a perfect square.
   As before, it follows that $6m^2-2k^2=r^2$ for some integer $r$.
   Assume by way of contradiction that $A_{k+1}=m^2\, (r^2-4k-2)$ is a perfect square as well.
   Then the factor $r^2-4k-2$ must be a perfect square as well.
   This, however, cannot happen because it is either $2$ or $3$ modulo 4.
\end{proof}

\begin{proof}[Proof of Theorem~\ref{thm:intro-integer}]
   The implication (ii) $\Rightarrow$ (i) follows from
   \cite{Bauer-Schulz}, where it is shown that
   all Seshadri
   constants are computed by elliptic curves in that case.
   Assume now that condition (i) holds.
   By Prop.~\ref{prop:exist-fractional}
   there exist ample line bundles $L$ whose Seshadri constant is not
   computed by elliptic curves and such that $\sqrt{L^2}$ is not
   an integer. Theorem~\ref{thm:intro-computed-elliptic}
   then shows that there are
   ample line bundles on $E\times E$ with fractional Seshadri constants.
\end{proof}

   The method of proof of Theorem~\ref{thm:intro-integer} shows the existence of line bundles
   with fractional Seshadri constants, but does not construct them explicitly.
   One idea to find such line bundles very concretely is
   to look for principal polarizations on $E\times E$.
   Those are either irreducible, i.e., they arise from a smooth curve of genus 2,
   or they correspond to a sum of two elliptic curves (see \cite[Thm.~2]{Weil:torelli}).
   In the irreducible case one has a fractional Seshadri constant
   $\eps(L)=\frac43$ by \cite[Prop.~2]{Steffens:remarks}.
   The problem of finding smooth genus two curves on $E\times E$ was first
   studied by Hayashida and Nishi
   in \cite{Hayashida-Nishi:genus-two}, where they show that
   if $\End(E)$ is isomorphic to the maximal order of $\Q(\sqrt{-m})$, then
   there exists such principal polarizations $L$ if and only if $m\notin\set{1,3,7,15}$.
   Note that this shows in particular that there are cases in which no such principal
   polarizations exist.
   We extend their result by
   exhibiting irreducible principal polarizations when
   $\End(E)=\Z[\sqrt{-e}]$ with $e\equiv 2,3$ (mod 4).
   (Note that these include non-maximal orders.)

\begin{proposition}\label{prop:exists-principal-polarization}
   Let $E$ be an elliptic curve with complex multiplication such that
   $\End(E)=\Z[\sqrt{-e}]$ with $e\equiv 2,3$ (mod 4).
   Then there exist irreducible principal polarizations $L$ on $E\times E$.
   In particular, we have $\eps(L)=\frac43$ for these line bundles.
\end{proposition}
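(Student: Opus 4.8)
The plan is to construct the principal polarization explicitly as a symmetric, positive-definite line bundle in $\NS(E\times E)$ of self-intersection $2$, and then rule out the possibility that it decomposes as a sum of two elliptic curves. Using the basis $(F_1,F_2,\nabla,\Sigma)$ introduced above, together with the intersection form computed there (with $\Re(\sigma)=0$ and $|\sigma|^2=e$ in the present case), a class $L=a_1F_1+a_2F_2+a_3\nabla+a_4\Sigma$ satisfies $L^2=2(a_1a_2-a_3^2-ea_4^2)$; so I would look for small integer solutions of $a_1a_2-a_3^2-ea_4^2=1$ with $a_1+a_2>0$. A natural candidate is $a_4=1$, $a_3=0$, which forces $a_1a_2=e+1$; the simplest admissible choice is $a_1=1$, $a_2=e+1$ (or, translating back to the original basis $(F_1,F_2,\Delta,\Gamma)$, an explicit effective class). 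I would write this out and verify ampleness via Prop.~\ref{prop:ample-cone}. That such a class is actually represented by an effective divisor, and in fact by a principal polarization, follows because on an abelian surface every ample line bundle with $L^2=2$ is a principal polarization, so $h^0(L)=1$ and $L$ is the class of a unique effective divisor $D$.

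The next step is to show that $D$ is not a sum of two elliptic curves, i.e.\ that $L$ is an \emph{irreducible} principal polarization; by Weil's theorem cited in the text, irreducibility is equivalent to $D$ being a smooth curve of genus $2$. Suppose for contradiction $L \equiv E_1 + E_2$ with $E_1, E_2$ elliptic curves on $E\times E$. Then $E_1\cdot E_2 = 1$ (since $L^2 = E_1^2 + 2E_1\cdot E_2 + E_2^2 = 2E_1\cdot E_2$), and each $E_i$ is a class in $\NS(E\times E)$ with $E_i^2=0$, $L\cdot E_i = 1$. Writing $E_i$ in the basis $(F_1,F_2,\nabla,\Sigma)$ and using that the form restricted to the $\langle\nabla,\Sigma\rangle$-plane is negative definite (discriminant $4e>0$ after the change of basis), the conditions $E_i^2=0$ and $L\cdot E_i=1$ become a small system of Diophantine equations. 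The key arithmetic input is that $e\equiv 2,3 \pmod 4$: this is exactly the condition under which the relevant norm form fails to represent the needed values, paralleling the Hayashida--Nishi obstruction. Concretely, the existence of elliptic curves $E_i$ with prescribed intersection numbers corresponds to solving $x^2 + e y^2 = $ (something) in a way that I expect to be obstructed modulo $4$ (or modulo $e$) precisely when $e\equiv 2,3\pmod 4$ and $e\notin\{1,3,7,15\}$-type exceptional values — but since we are assuming $e\equiv 2,3\pmod 4$ we automatically avoid $\{1,7,15\}$ (which are $\equiv 1\pmod 4$ or $\equiv 3$ only for $e=3$), so the only case needing separate care is $e=3$, excluded by hypothesis if $3\equiv 3\pmod 4$ — here I need to double-check whether $e=3$ must be excluded or is in fact fine, since $3\equiv 3\pmod 4$ falls under our hypothesis.

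The last step is bookkeeping: once $L$ is an irreducible principal polarization, it is the class of a smooth genus-$2$ curve $C$, and by \cite[Prop.~2]{Steffens:remarks} one has $\eps(L)=\frac43$, giving the final assertion. The main obstacle I anticipate is the Diophantine analysis in the second step: ruling out \emph{all} decompositions $L\equiv E_1+E_2$ requires understanding which primitive isotropic classes with $L\cdot E_i=1$ exist, and this is where the congruence condition $e\equiv 2,3\pmod 4$ must be used essentially (not merely as a hypothesis). I would handle this by reducing modulo a suitable small modulus; if a clean congruence obstruction does not materialize for a handful of small $e$, those finitely many cases can be checked by direct computation, exactly as in the proof of Prop.~\ref{prop:exist-fractional}.
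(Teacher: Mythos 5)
Your overall strategy --- produce an ample class with $L^2=2$ and then rule out decompositions $L\equiv E_1+E_2$ into elliptic curves --- is the right one and matches the paper's. But there are two genuine problems, one of which is fatal to the specific construction you propose. Your explicit candidate $L=F_1+(e+1)F_2+\Sigma$ satisfies $L\cdot F_2=1$, and $F_2$ is an elliptic curve: the class $L-F_2=F_1+eF_2+\Sigma$ is primitive, has $(L-F_2)^2=2e-2e=0$ and $(L-F_2)\cdot(F_1+F_2)=e+1>0$, hence is the class of an elliptic curve $N$, and $L\equiv F_2+N$ with $F_2\cdot N=1$. Equivalently, a principal polarization that restricts to degree $1$ on an elliptic subcurve splits off that curve as a polarized factor. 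So your candidate is a \emph{reducible} principal polarization --- exactly the kind of class the proposition must avoid --- and the congruence obstruction you hope for in the second step demonstrably does not exist for it. That second step is in any case only a plan (``I expect to be obstructed modulo $4$\dots I need to double-check''), and it is the entire content of the proposition. Your hesitation about $e=3$ is also a red herring: $e=3$ is allowed because $\Z[\sqrt{-3}]$ is a non-maximal order, so the Hayashida--Nishi exclusion of $m=3$ (which concerns the maximal order $\Z[\frac12(1+\sqrt{-3})]$) does not apply.

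The repair, which is what the paper does, is to choose the coefficients so that the obstruction is automatic rather than something to be verified case by case: take $L=2(n+1)F_1+2F_2+\nabla+\Sigma$ when $e=4n+2$, and $L=2(n+1)F_1+2F_2+\Sigma$ when $e=4n+3$. In both cases $L^2=2$, and --- this is the point --- all four numbers $L\cdot F_1$, $L\cdot F_2$, $L\cdot\nabla$, $L\cdot\Sigma$ are even (using $\Re(\sigma)=0$), so $L\cdot M$ is even for \emph{every} integral class $M$. In particular $L\cdot N\neq1$ for every elliptic curve $N$, which kills every possible decomposition $E_1+E_2$ at once, since such a decomposition would force $L\cdot E_1=E_1\cdot E_2=1$. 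This is the same parity trick already used in Proposition~\ref{prop:exist-fractional}; the hypothesis $e\equiv2,3$ (mod $4$) enters only in guaranteeing that coefficients with $L^2=2$ and all dual coordinates even can be found. If you want to salvage your approach, you must replace your candidate by one whose intersection numbers with the basis are all even, and then the ``Diophantine analysis'' reduces to this one-line parity observation.
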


\begin{proof}
   Note to begin with that for
   an irreducible principal polarization $L$
   one has $\eps(L)=\frac43$:
   This was first shown by Steffens~\cite{Steffens:remarks}
   when the Picard number is one, where the Seshadri constant
   is computed by a curve in $|4L|$.
   Thanks to the fact that this curve is irreducible,
   it also computes
   $\eps(L)$ in the general case by \cite[Lemma~6.2]{Bauer:sesh-alg-sf}.

   Turning to the proof of the proposition,
   we first treat the case $e\equiv 2$ (mod 4).
   Writing
   $e=4n+2$, consider the line bundle
   \be
      L_n:&=&\mathcal{O}_{E\times E}(2(n+1)F_1+2F_2+\nabla + \Sigma)\\
      &=&\mathcal{O}_{E\times E}(-(2n+1)F_1 + \Delta + \Gamma)
      \,.
   \ee
   It is a consequence of $L_n\cdot (F_1+F_2) = 2n+4>0$ and $L_n^2=2$ that
   $L_n$ is a principal polarization.
   Arguing as in the proof of Proposition~\ref{prop:exist-fractional}, it follows
   that the intersection number of $L_n$
   with any elliptic curve $N\subset E\times E$ is a multiple of $2$.
   So, $L\cdot N\ne 1$ and therefore $L$ must be irreducible.

   The case $e=4n+3$ can be dealt with analogously: In this case
   one can show that the line bundle
   \be
      L_n&:=&\mathcal{O}_{E\times E}(2(n+1)F_1+2F_2+ \Sigma)\\
      &=&\mathcal{O}_{E\times E}(-(2n+1)F_1+F_2 + \Gamma)
   \ee
   is an irreducible principal polarization.
\end{proof}

   Theorem~\ref{thm:intro-integer} provides
   the exact picture, on
   which surfaces $E\times E$
   fractional Seshadri constants
   occur.
   It is important to point out that
   on the other hand there are always line bundles whose
   Seshadri constant are integral -- in fact this happens for
   all bundles in the cone in $\NS(E\times E)$ generated by the classes of
   $F_1,F_2,\Delta,\Gamma$:

\begin{proposition}
   For line bundles
   \be
      L=\mathcal O_{E\times E}(a_1F_1+a_2F_2+a_3\Delta +a_4\Gamma)
   \ee
   with non-negative coefficients $a_i$, one has
   \be
      \eps(L) & = & \min\set{L\cdot F_1, L\cdot F_2, L\cdot \Delta, L\cdot \Gamma} \\
              & = & \min\{a_2+a_3+a_4, a_1+a_3+|\sigma|^2 a_4, \\
              & & \quad      a_1+a_2+|1-\sigma|^2 a_4, a_1+|\sigma|^2 a_2+|1-\sigma|^2 a_3\} \,.
   \ee
\end{proposition}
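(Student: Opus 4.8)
The plan is to show that the minimum in question is both an upper bound and a lower bound for $\eps(L)$. The upper bound is immediate: each of $F_1,F_2,\Delta,\Gamma$ is an irreducible curve of multiplicity one at the origin, so $\eps(L)\le L\cdot F_i$ for each of them, and hence $\eps(L)\le\min\{L\cdot F_1,L\cdot F_2,L\cdot\Delta,L\cdot\Gamma\}$. The second equality in the statement is then just a matter of reading off the intersection numbers from Proposition~\ref{prop:intersection-matrix}.

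For the lower bound I would argue that any irreducible curve $C$ through $0$ satisfies $L\cdot C\ge m\cdot\min\{L\cdot F_1,\dots,L\cdot\Gamma\}$, where $m=\mult_0(C)$. First I would reduce to the case where $L$ is \emph{effective}, which it is here since the $a_i$ are non-negative and $F_1,F_2,\Delta,\Gamma$ are effective. Write $D=a_1F_1+a_2F_2+a_3\Delta+a_4\Gamma$. If $C$ is not one of the four curves $F_1,F_2,\Delta,\Gamma$, then $C$ meets each of these curves properly, so $C\cdot F_i\ge 0$ for every component $F_i$ of $D$, and therefore
\be
   L\cdot C = D\cdot C = a_1(F_1\cdot C)+a_2(F_2\cdot C)+a_3(\Delta\cdot C)+a_4(\Gamma\cdot C)\ge 0.
\ee
The point is to get the factor $m$. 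Here I would use that on an abelian surface two algebraically equivalent irreducible curves through $0$ with the same multiplicity at $0$ are translates, and in fact invoke the translation trick: for a general translate, $F_i$ and $C$ meet transversally in $(F_i\cdot C)$ points, and since $F_i$ is elliptic it passes through $0$ with multiplicity one, so the local intersection estimate gives $F_i\cdot C\ge \mult_0(F_i)\cdot\mult_0(C)=m$ only after one is careful that $0\in F_i\cap C$. The cleaner route is: for each component $F_i$ appearing in $D$ with positive coefficient, translate $F_i$ so that it still passes through $0$ but is otherwise general; then $F_i\cdot C\ge m$ because $F_i$ is smooth at $0$ and $C$ has multiplicity $m$ there, while all other intersection points contribute non-negatively. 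Summing, $L\cdot C=D\cdot C\ge m\sum_i a_i'$ where the sum is over the relevant curves; choosing the bookkeeping correctly yields $L\cdot C\ge m\cdot\min_i(L\cdot F_i)$, hence $\frac{L\cdot C}{m}\ge\min_i(L\cdot F_i)$. The remaining case, where $C$ equals one of $F_1,F_2,\Delta,\Gamma$, gives $\frac{L\cdot C}{m}=L\cdot C\ge\min_i(L\cdot F_i)$ directly.

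The main obstacle I anticipate is making the translation argument fully rigorous when $C$ is one of the four distinguished curves in class but we are intersecting $D$ with a different curve $C'$: one must ensure that after translating the components of $D$ to pass through $0$, every local intersection number at $0$ is at least $m=\mult_0(C')$, and that the other (non-origin) intersection points genuinely contribute non-negatively, which requires that the translated curves remain distinct from $C'$. Since $F_1,F_2,\Delta,\Gamma$ each move in a one-parameter family (they have self-intersection $0$) and $C'$ is fixed, a general translate through $0$ will be distinct from $C'$ unless $C'$ itself lies in that family — but in that case $C'$ is a translate of $F_i$ through $0$, hence equal to $F_i$ (two algebraically equivalent irreducible curves through $0$ with the same multiplicity coincide, by \cite[Lemma~5.2]{Bauer:sesh-alg-sf}), and we are back in the trivial case. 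Thus the only genuinely delicate point is organising these case distinctions so that the bound $L\cdot C\ge m\cdot\min_i(L\cdot F_i)$ drops out uniformly; everything else is a routine intersection-theoretic computation using Proposition~\ref{prop:intersection-matrix}.
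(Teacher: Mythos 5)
Your proposal is correct and is essentially the paper's argument: the key step in both is that for an irreducible curve $C$ through $0$ which is not a component of the effective divisor $D=a_1F_1+a_2F_2+a_3\Delta+a_4\Gamma$, one has $D\cdot C\ge \mult_0(D)\cdot\mult_0(C)=(a_1+a_2+a_3+a_4)\,\mult_0(C)\ge (L\cdot F_1)\,\mult_0(C)$, while the components of $D$ themselves realize the stated minimum. The translation machinery you worry about is unnecessary (and in fact the only translate of $F_i$ through $0$ is $F_i$ itself): all four curves already pass through $0$ with multiplicity one, so the local intersection bound at $0$ applies directly once $C\ne F_i$.
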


\begin{proof}
   Let $D$ be the divisor $a_1F_1+a_2F_2+a_3\Delta +a_4\Gamma$,
   and let $C$ be
   any irreducible curve $C$ passing through $0$,
   which is not a component of $D$.
   As $D$ is effective, we have
   \be
      \frac{L\cdot C}{\mult_0 C}=\frac{D\cdot C}{\mult_0 C} \ge \frac{\mult_0 D\cdot \mult_0 C}{\mult_0 C} &\ge& a_1+a_2+a_3+a_4 \\
      &\ge& a_2+a_3+a_4 = L\cdot F_1
      \,.
   \ee
   This implies that $\eps(L)$ is computed by one of
   the components of $D$.
   Their intersection numbers with $L$ are computed using Prop.~\ref{prop:intersection-matrix},
   and this yields the assertion of the proposition.
\end{proof}

   As the following example shows,
   the line bundles in the cone generated by the classes of
   $F_1,F_2,\Delta,\Gamma$
   are not the only ones with integral Seshadri constants.

\begin{example}
   Consider the line bundle $L=\mathcal O_{E\times E}(4F_1+2F_2-\Delta)$.
   It is ample by Prop.~\ref{prop:ample-cone}.
   The fact that $L\cdot F_1 =1$ implies
   that its Seshadri constant is $1$.
\end{example}

   Finally, we will discuss whether or not the statement in Theorem~\ref{thm:intro-computed-elliptic}
   can be generalized such that the conditions hold for each individual line bundle.
   Clearly, if there exists a line bundle $L$ with a fractional Seshadri constant,
   then a suitable multiple of $L$ will lead to a line bundle,
   whose Seshadri constant is an integer but is not calculated by an elliptic curve.
   One might hope that it still holds for primitive line bundles.
   This, however, is not the case:

\begin{proposition}\label{prop:primitive-lb}
   There exists an abelian surface $X$ and a primitive line bundle $L$ on $X$
   such that the Seshadri constant $\varepsilon(L)$ is an integer
   less than $\sqrt{L^2}$
   and is calculated by a non-elliptic curve.
\end{proposition}

\begin{proof}
   Let $E$ be an elliptic curve with complex multiplication
   such that $\End(E)=\Z[\sqrt{-2}]$.
   As noted in the proof of Prop.~\ref{prop:exists-principal-polarization},
   the Seshadri constant of the principal polarization
   $L_0=\mathcal{O}_{E\times E} (-F_1+\Delta+\Gamma)$ on $E\times E$
   is computed by an irreducible curve $C\in |4L_0|$ with $\mult_0(C)=6$.
   We consider the primitive line bundle
   $L:=\mathcal{O}_{E\times E}(D)$, where the divisor $D$ is defined as
   \be
      D&:=&3C+F_1+F_2+\Delta\\
      &=&-11F_1+F_2+13\,\Delta+12\,\Gamma\,.
   \ee
   We claim that the Seshadri constant $\varepsilon(L)$ equals $20$
   and is calculated by $C$.

   We have
   \be
      \frac{L\cdot D}{\mult_0(D)}=\frac{D^2}{\mult_0{D}}=\frac{438}{21}<\sqrt{438}=\sqrt{L^2}\,,
   \ee
   so $D$ is submaximal for $L$. Therefore \cite[Lemma~6.2]{Bauer:sesh-alg-sf} implies that the Seshadri constant of $L$ is calculated by a component of $D$.
   So checking $C$, $F_1$, $F_2$ and $\Delta$ we see that
   \be
      \varepsilon(L)=20=\frac{L\cdot C}{\mult_0 C}<26=L\cdot F_1=L\cdot F_2 = L\cdot \Delta\,.
   \ee
\end{proof}

%*****************************************************************************

%*****************************************************************************
% Addresses

\footnotesize
   \bigskip
   Thomas Bauer,
   Fachbereich Mathematik und Informatik,
   Philipps-Universit\"at Marburg,
   Hans-Meerwein-Stra\ss e,
   D-35032 Marburg, Germany.

   \nopagebreak
   \textit{E-mail address:} \texttt{tbauer@mathematik.uni-marburg.de}

   \bigskip
   Felix Fritz Grimm,
   Fachbereich Mathematik und Informatik,
   Philipps-Universit\"at Marburg,
   Hans-Meerwein-Stra\ss e,
   D-35032 Marburg, Germany.

   \nopagebreak
   \textit{E-mail address:} \texttt{felix@geller-grimm.de}

   \bigskip
   Maximilian Schmidt,
   Fachbereich Mathematik und Informatik,
   Philipps-Universit\"at Marburg,
   Hans-Meerwein-Stra\ss e,
   D-35032 Marburg, Germany.

   \nopagebreak
   \textit{E-mail address:} \texttt{schmid4d@mathematik.uni-marburg.de}

%*****************************************************************************

\end{document}